\newtheorem{Theorem}{Theorem}[section]
\newtheorem{Lemma}[Theorem]{Lemma}
\newtheorem{Ex}{Example}[section]
\newtheorem{Rem}{Remark}[section]
\newtheorem{proofhead}{Proof}
\newenvironment{proof}
    {
        \par
        \begin{proofhead}
        \normalfont
    }
    {   \qed
        \end{proofhead}
        \par
    }
\newcommand{\bb}{\mathbb}
\def \mean{{\bb E}}
\def \E{{\bb E}}
\def \var{{\bb V}{\rm ar}}
\def \cov{{\bb C}{\rm ov}}
\def \pr{{\bb P}}
\newcommand{\qed}{\hfill$\Box$}
\newcommand{\refs}[1]{(\ref{#1})}
\newcommand{\halmos}{\hfill $\Box$}
\newcounter{mylistcnt}
\renewcommand{\themylistcnt}{{\rm({\roman{mylistcnt}})}}
\newcounter{zad}
\begin{document}
\title{Exact asymptotics of supremum of a stationary Gaussian process over a random interval}
\author{Marek Arendarczyk\thanks{MA work was supported by MNiSW Grant N N201 412239 (2010-2011)},
Krzysztof D\c{e}bicki\thanks{KD work was supported by MNiSW
Research Grant N N201 394137 (2009-2011)}
\vspace*{.08in} \\
Mathematical Institute, University of Wroc\l aw \\
pl. Grunwaldzki 2/4, 50-384 Wroc\l aw, Poland }

\date{}
\maketitle

\begin{abstract}
Let $\{X(t) : t \in [0, \infty) \}$ be a centered stationary
Gaussian process. We study the exact asymptotics of $\pr (\sup_{s
\in [0,T]} X(t) > u)$, as $u \to \infty$, where $T$ is an
independent of \{X(t)\} nonnegative random variable. It appears
that the heaviness of $T$ impacts the form of the asymptotics,
leading to three scenarios: the case of integrable $T$, the case of $T$ having
regularly varying tail distribution with parameter
$\lambda\in(0,1)$ and  the case of $T$ having slowly varying tail distribution.

\noindent {\bf Key words:} asymptotics, Gaussian process, supremum distribution.
\\
\noindent {\bf AMS 2000 Subject Classification}: Primary 60G15,
Secondary 60G70, 68M20.

\end{abstract}


\section{Introduction}\label{s.intro}

Let
$\{X(t):t\ge0\}$ be a centered stationary Gaussian processes with covariance function
$r(t):=\cov(X(s+t),X(s))$.
One of the seminal results in the extreme value theory of
Gaussian processes is Pickands' exact asymptotics for the tail distribution of supremum
of $\{X(t):t\ge0\}$ over a given interval, say $[0,T]$,
\begin{equation}\label{pic}
        \pr(\sup_{s \in [0,T]} X(s) > u)
    =
        C^\frac{1}{\alpha} \mathcal{H}_\alpha u^\frac{1}{\alpha} \Psi(u) (1 + o(1)),
\end{equation}
as $u\to\infty$, providing
that $r(t) < 1$ for all $t > 0$ and
$r(t)=1-C|t|^\alpha + o(|t|^\alpha)$ as $t\to0$,
where $\alpha\in(0,2]$, $C>0$ and $\mathcal{H_\alpha}$ is the Picands' constant.

The aim of this paper is to give a
counterpart of \refs{pic}
for $T$ being an independent of $\{X(t):t\ge0\}$ nonnegative random variable.
The motivation of analysis of supremum distribution over a random-length
interval stems both from theoretic-level questions related with, e.g., extreme properties of
subordinated Gaussian processes (see, e.g., \cite{ArD10,Koz06})
and applied-level problems in such fields as queueing theory or ruin theory, see e.g. \cite{BDZ04}.

It appears that the form of the obtained asymptotics strongly depends on
{\it heaviness} of $T$, leading to three qualitatively different regimes:  the case of finite $\mean T$ ({\bf D1}),
the case of $T$ having regularly varying tail distribution with parameter $\lambda\in(0,1)$ ({\bf D2}),
and the case of $T$ having slowly varying tail distribution ({\bf D3}).

Each of the above cases needs its own approach, relying on the
interplay between variability of a Gaussian process
$\{X(t):t\ge0\}$ and heaviness of $T$. In particular, in case {\bf D1},
the asymptotics of \refs{pic} is mainly
inherited from the Gaussian structure of $\{X(t)\}$, while $T$ contributes to the
asymptotics only by its mean. The proof of this scenario is based
on the {\it double sum technique}; see Piterbarg \cite{Pit96} as a
key monograph in this field. A different approach is needed for
the study of cases {\bf D2}, {\bf D3}, where the heaviness of $T$
takes control over the form of the asymptotics.
The proof of this scenarios relies on an extension
of Theorem 12.3.4 in \cite{Lead83} (see Lemma 4.3).

The paper is organized as follows.
In Section \ref{s.notation} we introduce notation and precise the scenarios which are of our interest.
The main results of the paper are presented in Section \ref{s.results}.
All proofs are deferred to Section \ref{s.proofs}.

\section{Notation}\label{s.notation}
Let $\{X(t): t \in [0, \infty)\}$ be a centered stationary Gaussian process with a.s. continuous sample paths
and covariance function $r(t) := \cov(X(s), X(s + t))$.
We impose the following (subsets of) assumptions on $r(t)$: \\
\\
{\bf A1} $r(t) = 1 - C|t|^\alpha + o(|t|^\alpha)$, as $t \to 0$, with $\alpha \in (0,2]$ and $C>0$; \\
{\bf A2} $r(t) < 1$ for all $t > 0$;\\
{\bf A3} $r(t)\log(t) \to 0$ as $t \to \infty$.\\
\\
We are interested in the exact asymptotics of
\begin{eqnarray} \label{problem}
    \pr(\sup_{s \in [0, T]} X(s) > u)
\end{eqnarray}
as $u \to \infty$, where $T$ is an independent of $\{X(t)\}$ nonnegative random variable.

We distinguish three scenarios related to the heaviness of $T$:
\begin{itemize}
\item[{\bf D1}] $T$ is integrable, i.e., $\E T < \infty $ ;
\item[{\bf D2}] $T$ has regularly varying tail distribution with parameter $\lambda \in (0,1)$,
            i.e., $\pr(T > t) = L(t) t^{-\lambda} $, where $L(\cdot)$ is slowly varying at $\infty$;
\item[{\bf D3}] $T$ has slowly varying tail distribution, i.e., $\pr(T > t) = L(t)$,
            where $L(\cdot)$ is slowly varying at $\infty$.
\end{itemize}

In further analysis we use the following notation.
For given $\alpha\in(0,2]$,
by $\mathcal{H}_\alpha$ we denote the \emph{Pickands's constant}, defined by the following limit
\[
    \mathcal{H}_\alpha = \lim_{S \to \infty} \frac{\mathcal{H}_\alpha(S)}{S},
\]
where $\mathcal{H}_\alpha(S):=\E \exp\left(\sup_{t \in [0,S]}
\sqrt{2} B_\alpha(t) - t^{\alpha} \right)$, with
$\{B_\alpha(t):t\ge0\}$ being a standard fractional Brownian
motion with Hurst parameter $\alpha/2$. Moreover, let
$\Psi(u):=\pr(\mathcal{N}>u)$, with $\mathcal{N}$ denoting the
standard normal random variable. Recall that,
\begin{equation}        \label{Psi}
\Psi(u) = \frac{1}{\sqrt{2\pi}u}\exp\left(-u^2/2\right) (1+o(1)),\ \ {\rm as} \ \ u \to \infty.
\end{equation}

\section{Main results}\label{s.results}
In this section we present the asymptotics of \refs{problem}
for the described in Section \ref{s.intro}
cases {\bf D1, D2, D3}.
All proofs are deferred to Section \ref{s.proofs}.
We start with the analysis of regime {\bf D1}.
\begin{Theorem} \label{th.finite}
Let $\{X(t): t \in [0, \infty)\}$ be a centered stationary Gaussian process with covariance function $r(t)$
that satisfies {\bf A1-A2} and let $T$ be an independent of $\{X(t): t \in [0, \infty)\}$
nonnegative random variable that satisfies {\bf D1}. Then
\[
    \pr(\sup_{s \in [0,T]} X(s) > u) =   \E T C^\frac{1}{\alpha}\mathcal{H}_\alpha u^\frac{2}{\alpha} \Psi(u) (1 + o(1)),
\]
as $u \to \infty$.
\end{Theorem}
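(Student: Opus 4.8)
The plan is to condition on the independent variable $T$ and reduce the statement to a dominated-convergence argument built on the classical Pickands asymptotics for a deterministic interval. Writing $p(t,u) := \pr(\sup_{s \in [0,t]} X(s) > u)$ and using independence of $T$ and $\{X(t)\}$, we have
\[
    \pr(\sup_{s \in [0,T]} X(s) > u) = \E\left[ p(T,u) \right] = \int_0^\infty p(t,u)\, d\pr(T \le t).
\]
Dividing through by the target normalisation, it suffices to show that
\[
    g_u(t) := \frac{p(t,u)}{C^{1/\alpha}\mathcal{H}_\alpha u^{2/\alpha}\Psi(u)}
\]
satisfies $\E[g_u(T)] \to \E T$ as $u \to \infty$.

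The pointwise behaviour of $g_u$ is exactly the content of Pickands' theorem: under {\bf A1}--{\bf A2}, for every fixed $t > 0$ one has $p(t,u) = t\, C^{1/\alpha}\mathcal{H}_\alpha u^{2/\alpha}\Psi(u)(1+o(1))$, so $g_u(t) \to t$. The degenerate endpoint $t=0$, where $p(0,u)=\Psi(u)$ and $g_u(0) = u^{-2/\alpha}/(C^{1/\alpha}\mathcal{H}_\alpha) \to 0$, is consistent with the limit function $t \mapsto t$. Hence $g_u(t) \to t$ for every $t \ge 0$.

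To interchange limit and integral I would then establish a uniform, $T$-integrable upper bound. By monotonicity in $t$, subadditivity (a union bound over the $\lfloor t \rfloor + 1$ unit subintervals of $[0,\lfloor t\rfloor+1]$) and stationarity,
\[
    p(t,u) \le p(\lfloor t \rfloor + 1, u) \le (\lfloor t \rfloor + 1)\, p(1,u) \le (t+1)\, p(1,u),
\]
valid for all $t \ge 0$. Since Pickands applied to the unit interval gives $p(1,u) \le 2\, C^{1/\alpha}\mathcal{H}_\alpha u^{2/\alpha}\Psi(u)$ for all sufficiently large $u$, we obtain the uniform bound $g_u(t) \le 2(t+1)$ for all $t \ge 0$ and all large $u$. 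Because $T$ is integrable (assumption {\bf D1}), the dominating function $t \mapsto 2(t+1)$ is integrable with respect to the law of $T$, with $\E[2(T+1)] = 2(\E T + 1) < \infty$.

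With the pointwise convergence $g_u(t) \to t$ and the integrable domination $g_u(t) \le 2(t+1)$ in hand, dominated convergence yields $\E[g_u(T)] \to \E[T] = \E T$, which is precisely the claimed asymptotics. The only genuinely delicate point is the uniform control of $p(t,u)$: the constant in the single-interval estimate $p(1,u) \le 2\, C^{1/\alpha}\mathcal{H}_\alpha u^{2/\alpha}\Psi(u)$ must be chosen independently of $t$, which is where the double-sum machinery of Piterbarg enters; everything else follows routinely from monotonicity, stationarity, and integrability of $T$.
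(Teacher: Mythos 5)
Your argument is correct, but it takes a genuinely different route from the paper's. You treat the fixed-interval Pickands asymptotics \refs{pic} (Theorem D.2 in \cite{Pit96}) as a black box: after conditioning on $T$, the pointwise convergence $g_u(t)\to t$, together with the domination $g_u(t)\le 2(t+1)$ obtained from the union bound over unit blocks, stationarity, and the unit-interval Pickands estimate, lets dominated convergence produce both bounds at once (Fatou gives the lower bound, the integrable majorant $2(T+1)$ the upper one). The paper instead redoes the work at the level of the double-sum technique, with the integration against $dF_T(t)$ woven directly into the estimates: for the upper bound it partitions $[0,t]$ into blocks $\Delta_k$ of length $Ru^{-2/\alpha}$, applies Piterbarg's Lemma D.1 to a single block and lets $R\to\infty$; for the lower bound it uses Bonferroni and then shows the double-sum terms $I_3,I_4,I_5$ are negligible via Borell's inequality. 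What your approach buys is brevity and modularity --- Theorem \ref{th.finite} becomes a corollary of the classical Pickands theorem plus integrability of $T$, under exactly the hypotheses \textbf{A1}--\textbf{A2} and \textbf{D1}; what the paper's approach buys is self-containedness at the level of local estimates (only the short-interval Lemma D.1, rather than the full strength of Theorem D.2, is needed), in the same style as its harder proofs of Theorems \ref{th.reg} and \ref{th.veryheavy}. One marginal remark: converting the limit $\E[g_u(T)]\to\E T$ into the stated multiplicative form $(1+o(1))$ tacitly requires $\E T>0$, i.e.\ $\pr(T>0)>0$; the degenerate case $T\equiv 0$ is implicitly excluded in the paper's proof as well, so this is a feature of the theorem's formulation rather than a gap in your argument.
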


\begin{Rem}
The combination of \refs{pic} with Theorem \ref{th.finite}
leads to
\[
    \pr(\sup_{s \in [0,T]} X(s) > u)=  \E T  \pr(\sup_{s \in [0,1]} X(s) > u)(1+o(1)),
\]
as $u\to\infty$.
Hence, under {\bf D1}, the asymptotics of \refs{problem}
is mainly influenced by the Gaussian process $\{X(t)\}$, whereas
$T$ contributes only by its average behavior.
\end{Rem}

The next theorem deals with the case of $T$ that satisfies {\bf D2}.
For this scenario the
contribution of the event that $T$ is 'large' is crucial. Thus
we additionally need to assume ${\bf A3}$ (compare with, e.g., Theorem 12.3.4 in \cite{Lead83}).

\begin{Theorem}   \label{th.reg}
Let $\{X(t): t \in [0, \infty)\}$ be a centered stationary Gaussian process with covariance function
$r(t)$ that satisfies {\bf A1 - A3} and let $T$ be an independent of $\{X(t): t \in [0, \infty)\}$ nonnegative
random variable that satisfies {\bf D2}.
Then
\begin{eqnarray}
    \nonumber
    \lefteqn{
        \pr(\sup_{s \in [0,T]} X(s) > u)}\\
    \nonumber
    &=&
        \Gamma(1-\lambda)
        \frac{\mathcal{H}_\alpha^\lambda C^{\lambda/\alpha}}{(2\pi)^{\lambda/2}}
        L\left(
        u^\frac{\alpha - 2}{\alpha}
        \exp(u^2/2)\right)
        u^\frac{\lambda(2 - \alpha)}{\alpha}
        \exp\left( - \frac{\lambda u^2}{2}\right) (1 + o(1))
\end{eqnarray}
as
$       u \to \infty$.
\end{Theorem}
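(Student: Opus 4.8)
The plan is to exploit independence of $T$ and $\{X(t)\}$ by conditioning, writing
\[
\pr(\sup_{s\in[0,T]}X(s)>u)=\int_0^\infty \pr(\sup_{s\in[0,t]}X(s)>u)\, d\pr(T\le t),
\]
and then to analyze this integral through the natural Pickands scale $\mu_u:=C^{1/\alpha}\mathcal{H}_\alpha u^{2/\alpha}\Psi(u)$, which is exactly the intensity appearing in Theorem~\ref{th.finite}. The heuristic driving everything is that, under the mixing assumption {\bf A3}, upcrossings of the high level $u$ cluster into an approximately Poisson pattern of intensity $\mu_u$, so that $\pr(\sup_{s\in[0,t]}X(s)>u)\approx 1-\e{-\mu_u t}$; equivalently, the first passage time $\tau_u:=\inf\{s\ge0:X(s)>u\}$ satisfies $\mu_u\tau_u\Rightarrow\mathrm{Exp}(1)$. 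Because $T$ is heavy-tailed, the dominant contribution to the integral comes from $t$ of order $1/\mu_u\to\infty$, which is precisely the crossover length at which $\pr(\sup_{s\in[0,t]}X(s)>u)$ transitions from being linear in $t$ to saturating at $1$.

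First I would recast the probability as $\E[\bar F_T(\tau_u)]$, where $\bar F_T(t)=\pr(T>t)=L(t)t^{-\lambda}$, either via integration by parts or directly from the identity $\{\sup_{s\in[0,T]}X(s)>u\}=\{\tau_u\le T\}$ together with independence. Next I would rescale by setting $t=x/\mu_u$ and use the extension of Theorem 12.3.4 of \cite{Lead83} (Lemma 4.3) to obtain the pointwise limit $\pr(\sup_{s\in[0,x/\mu_u]}X(s)>u)\to 1-\e{-x}$ for each fixed $x>0$, i.e. that the law of $\mu_u\tau_u$ converges to the standard exponential. Substituting $\bar F_T(x/\mu_u)=\mu_u^\lambda L(x/\mu_u)x^{-\lambda}$ and invoking the uniform convergence theorem for slowly varying functions, $L(x/\mu_u)\sim L(1/\mu_u)$, I would arrive at
\[
\E[\bar F_T(\tau_u)]\sim \mu_u^\lambda L(1/\mu_u)\int_0^\infty x^{-\lambda}\e{-x}\,dx=\Gamma(1-\lambda)\,\mu_u^\lambda L(1/\mu_u).
\]
The proof then closes by elementary asymptotics: using $\Psi(u)\sim(\sqrt{2\pi}\,u)^{-1}\e{-u^2/2}$ gives $\mu_u^\lambda\sim (2\pi)^{-\lambda/2}\mathcal{H}_\alpha^\lambda C^{\lambda/\alpha}u^{\lambda(2-\alpha)/\alpha}\e{-\lambda u^2/2}$ and $1/\mu_u\sim \text{const}\cdot u^{(\alpha-2)/\alpha}\e{u^2/2}$, whence $L(1/\mu_u)\sim L(u^{(\alpha-2)/\alpha}\e{u^2/2})$ because multiplicative constants are absorbed by slow variation; this reproduces the stated expression.

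The main obstacle I anticipate is the rigorous interchange of limit and integration, that is, upgrading the pointwise limit $\pr(\sup_{s\in[0,x/\mu_u]}X(s)>u)\to1-\e{-x}$ to convergence of the rescaled integral. This requires a uniform, rather than merely pointwise, form of Lemma 4.3 (so that {\bf A3} genuinely controls the Poisson-clustering error across the whole range of $t$) together with Potter's bounds to dominate the slowly varying factor: near $x=0$ integrability is guaranteed by $\lambda<1$, so that $x^{-\lambda}$ is integrable, while the contribution of large $x$ --- equivalently large $t$, where $\pr(\sup_{s\in[0,t]}X(s)>u)\to1$ but $\bar F_T(t)\to0$ --- must be shown negligible via a Potter bound of the form $L(x/\mu_u)/L(1/\mu_u)\le \text{const}\cdot\max(x^{\varepsilon},x^{-\varepsilon})$ combined with the exponential tail of the limit law. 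Splitting the integral into a bulk region $t\asymp 1/\mu_u$, a negligible small-$t$ region, and a controlled large-$t$ tail, and dominating each uniformly in $u$, is the technical heart of the argument.
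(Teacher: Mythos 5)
Your proposal is correct and follows essentially the same route as the paper: conditioning on $T$, splitting the integral at the scale $m(u)=1/\mu_u$ into small/bulk/tail regions, applying the uniform-on-compacts extension of Theorem 12.3.4 of Leadbetter et al.\ (the paper's Lemma 4.3) together with the uniform convergence theorem for regularly varying functions in the bulk, and finishing with the $\Psi(u)$ asymptotics and slow variation. The paper handles your two boundary regions exactly as you anticipate, only slightly more directly --- the small-$t$ region via stationarity plus Karamata's theorem (yielding a bound of order $A_0^{1-\lambda}\pr(T>m(u))$ that vanishes as $A_0\to 0$), and the large-$t$ region by simple monotonicity, $I_3\le\pr(T>A_\infty m(u))$, rather than Potter bounds.
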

\begin{Rem}\label{rem2}
With $m(u):=\left[C^\frac{1}{\alpha} \mathcal{H}_\alpha u^\frac{2}{\alpha} \Psi(u) \right]^{-1}$
we can rewrite Theorem \ref{th.reg} in the form
\[
        \pr(\sup_{s \in [0,T]} X(s) > u)
    =
        \Gamma(1 - \lambda) \pr(T > m(u)) (1 + o(1))
\]
as $u \to \infty$. Thus, under {\bf D2}, the
heaviness of $T$ takes the control over the form
of the asymptotics.
\end{Rem}

Finally we turn on to scenario {\bf D3}.

\begin{Theorem}\label{th.veryheavy}
Let $\{X(t): t \in [0, \infty)\}$ be a centered stationary Gaussian process with covariance function
$r(t)$ that satisfies {\bf A1 - A3} and let $T$ be an independent of $\{X(t): t \in [0, \infty)\}$ nonnegative
random variable that satisfies {\bf D3}.
Then
\[
    \pr(\sup_{s \in [0,T]} X(s) > u)
    =
        L\left(
        u^\frac{\alpha - 2}{\alpha}
        \exp(u^2/2)\right)     (1 + o(1)),
\]
as $u \to \infty$.
\end{Theorem}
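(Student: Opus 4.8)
The plan is to run the argument of Theorem \ref{th.reg} at the endpoint $\lambda=0$. Set $p_u(t):=\pr(\sup_{s\in[0,t]}X(s)>u)$ and $\overline{F}(t):=\pr(T>t)=L(t)$, and condition on the independent variable $T$ to obtain
\[
    \pr(\sup_{s\in[0,T]}X(s)>u)=\int_{[0,\infty)}p_u(t)\,dF_T(t).
\]
Writing $m(u):=\left[C^{1/\alpha}\mathcal{H}_\alpha u^{2/\alpha}\Psi(u)\right]^{-1}$ as in Remark \ref{rem2}, relation \refs{Psi} gives $m(u)=\frac{\sqrt{2\pi}}{C^{1/\alpha}\mathcal{H}_\alpha}\,u^{(\alpha-2)/\alpha}\e{u^2/2}(1+o(1))$, so $m(u)$ differs from $u^{(\alpha-2)/\alpha}\e{u^2/2}$ only by an asymptotically constant factor; since $L$ is slowly varying, the asserted right-hand side equals $L(m(u))(1+o(1))$. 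It thus suffices to prove $\int_{[0,\infty)}p_u(t)\,dF_T(t)=L(m(u))(1+o(1))$, which I do by matching the $\liminf$ and $\limsup$ of the ratio with $L(m(u))$.

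The single analytic input is Lemma 4.3 (the extension of Theorem 12.3.4 in \cite{Lead83}), which under {\bf A1--A3} yields the Poisson approximation $p_u(m(u)s)\to 1-\e{-s}$ for each fixed $s\ge0$ as $u\to\infty$. The lower bound is then immediate from monotonicity of $t\mapsto p_u(t)$: for fixed $K>0$,
\[
    \int_{[0,\infty)}p_u(t)\,dF_T(t)\ge p_u(Km(u))\,\pr\!\left(T\ge Km(u)\right)\ge p_u(Km(u))\,L(Km(u)).
\]
Letting $u\to\infty$ and using $L(Km(u))\sim L(m(u))$ (slow variation) together with Lemma 4.3 gives $\liminf_{u\to\infty}\frac{\int p_u\,dF_T}{L(m(u))}\ge 1-\e{-K}$, and letting $K\to\infty$ yields $\liminf\ge1$.

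For the upper bound I split at a level $\delta m(u)$ with $\delta>0$ fixed. The upper tail is controlled crudely, $\int_{(\delta m(u),\infty)}p_u\,dF_T\le \overline{F}(\delta m(u))=L(\delta m(u))\sim L(m(u))$. For the lower part I use a uniform near-linear bound on $p_u$: Pickands' asymptotics give $p_u(h)=(h/m(u))(1+o(1))$ for fixed $h$, and subadditivity of $p_u$ (from stationarity, $p_u(a+b)\le p_u(a)+p_u(b)$) upgrades this to $p_u(t)\le C(t+1)/m(u)$ uniformly in $t\ge0$ for large $u$. Hence
\[
    \int_{[0,\delta m(u)]}p_u(t)\,dF_T(t)\le\frac{C}{m(u)}\left(\int_{[0,\delta m(u)]}t\,dF_T(t)+1\right).
\]
Since $1/m(u)=o(L(m(u)))$, it remains to bound the truncated first moment. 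Integration by parts and Karamata's theorem ($\int_0^{X}\overline{F}(t)\,dt\sim X\overline{F}(X)$ for slowly varying $\overline{F}$) give $\int_{[0,\delta m(u)]}t\,dF_T(t)=-\delta m(u)\overline{F}(\delta m(u))+\int_0^{\delta m(u)}\overline{F}(t)\,dt=o\!\left(m(u)L(m(u))\right)$, so the lower part is $o(L(m(u)))$. Combining, $\limsup_{u\to\infty}\frac{\int p_u\,dF_T}{L(m(u))}\le\lim_{u\to\infty}\frac{L(\delta m(u))}{L(m(u))}=1$, and together with the lower bound this proves the theorem.

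The delicate point, and the reason {\bf D3} needs its own treatment, is the estimate of the lower part $\int_{[0,\delta m(u)]}p_u\,dF_T$. Under {\bf D3} the mean $\E T$ may be infinite, so $T$ places most of its mass on scales $t\ll m(u)$ where $p_u(t)$ is small but the slowly varying weight $\overline{F}(t)=L(t)$ is not negligible; the argument succeeds only because the near-linear growth $p_u(t)\approx t/m(u)$ forces the leading contributions $-\,X\overline{F}(X)$ and $\int_0^{X}\overline{F}$ in the truncated moment to cancel by Karamata, leaving $o(m(u)L(m(u)))$. Making the uniform near-linear bound on $p_u$ precise up to the scale $\delta m(u)$ (through the interplay of Pickands' asymptotics, subadditivity, and, in the equivalent rescaled-measure formulation, Potter's bounds for $L$) is the technical heart; once it is in place the computation is exactly the $\lambda=0$ specialisation of the proof of Theorem \ref{th.reg}, in which $\int_0^\infty\e{-s}\,ds=\Gamma(1)=1$ replaces $\int_0^\infty\e{-s}s^{-\lambda}\,ds=\Gamma(1-\lambda)$.
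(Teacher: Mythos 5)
Your proof is correct and takes essentially the same route as the paper's: the lower bound via the pointwise limit from Lemma \ref{l.lim} together with slow variation, and the upper bound by splitting at a fixed multiple of $m(u)$, bounding $p_u(t)\le C(t+1)/m(u)$ through stationarity/subadditivity and Pickands' asymptotics, then applying Karamata's theorem. The only cosmetic difference is that you retain the boundary term and exploit the cancellation $\int_0^{X}\overline{F}(t)\,dt - X\overline{F}(X)=o\bigl(X\overline{F}(X)\bigr)$ to make the lower part $o(L(m(u)))$ outright, whereas the paper drops that negative term, settles for the bound $A_0\,\pr(T>m(u))$, and removes it by letting $A_0\to0$ at the end.
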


\begin{Rem}
Following notation introduced in Remark \ref{rem2}, under {\bf D3}, the asymptotics of
\refs{problem} takes the following form
\[
        \pr(\sup_{s \in [0,T]} X(s) > u)
    =
        \pr(T > m(u)) (1 + o(1)),
\]
as $u \to \infty$.
\end{Rem}

\section{Proofs}\label{s.proofs}

In this section we give detailed proofs of Theorems \ref{th.finite}, \ref{th.reg}, \ref{th.veryheavy}.
We begin with some auxiliary lemmas.

Let
$m(u):=\left[C^\frac{1}{\alpha} \mathcal{H}_\alpha u^\frac{2}{\alpha} \Psi(u) \right]^{-1}$.

The following lemma is a straightforward consequence of
Lemma 12.3.1 in \cite{Lead83}.

\begin{Lemma}  \label{12.3.1} 
Let $\varepsilon, a, A_\infty > 0$ and $q(u)= a u^{-2/\alpha}$.
Suppose that {\bf A1} and {\bf A3} hold.
Then
\[
         \frac{A_\infty m(u)}{q(u)}
    \sum_{\varepsilon \le kq(u) \le A_\infty m(u)} |r(kq(u))| \exp\left( -\frac{u^2}{1 + |r(kq(u))|}
    \right)
    \to
        0
\]
as $u \to \infty$.
\end{Lemma}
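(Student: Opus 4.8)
The plan is to make all orders of magnitude explicit first. Substituting \refs{Psi} gives $m(u)=\frac{\sqrt{2\pi}}{C^{1/\alpha}\mathcal{H}_\alpha}\,u^{(\alpha-2)/\alpha}\exp(u^2/2)\,(1+o(1))$, so that the prefactor behaves like $\frac{A_\infty m(u)}{q(u)}\sim K_0\,u\exp(u^2/2)$ for some $K_0>0$, while the upper summation limit satisfies $\log(A_\infty m(u))=\tfrac12u^2+O(\log u)$. The number of summands is of the same order $\sim K_0\,u\exp(u^2/2)$ as the prefactor (the expression is exactly the diagonal count turning a Berman-type double sum on $[0,A_\infty m(u)]$ into this single sum). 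Consequently a naive uniform bound $|r(kq(u))|\le\sup_{t\ge\varepsilon}|r(t)|=:\eta<1$ is hopeless: it produces a factor $\exp(u^2)$ against only $\exp(-u^2/(1+\eta))$, and $1/(1+\eta)<1$. The point must therefore be that $|r|$ is small on the bulk of the range, and the argument splits according to the size of the lag $t=kq(u)$.

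First I would fix a large constant $M$ and split the sum at $t=M$. On the near range $\varepsilon\le kq(u)\le M$ there are only $O(q(u)^{-1})=O(u^{2/\alpha})$ terms, i.e.\ polynomially many; by {\bf A1} the covariance is continuous at $0$, hence (being a stationary covariance) everywhere, and since {\bf A3} forces $r(t)\to0$ it excludes $|r(t)|=1$ for $t>0$, so that $\eta_M:=\sup_{t\in[\varepsilon,M]}|r(t)|<1$. Bounding each term by $\eta_M\exp(-u^2/(1+\eta_M))$, the near-range contribution is at most a polynomial in $u$ times $\exp\!\big(u^2(\tfrac12-\tfrac1{1+\eta_M})\big)$, whose exponent is strictly negative because $\eta_M<1$; hence this part tends to $0$ as $u\to\infty$ for every fixed $M$.

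The far range $M<kq(u)\le A_\infty m(u)$ is the crux. Here I would use {\bf A3} in the sharp form $|r(t)|\log t\le\rho(M):=\sup_{t\ge M}|r(t)\log t|$, with $\rho(M)\to0$ as $M\to\infty$, together with the elementary estimate $\exp(-u^2/(1+|r|))\le\exp(-u^2)\exp(u^2|r|)$. Replacing the sum by $q(u)^{-1}\int_M^{A_\infty m(u)}$ and substituting $t=e^{\ell}$ reduces the far-range contribution to a constant multiple of
\[
\rho(M)\,u^{1+2/\alpha}\exp(-u^2/2)\int_{\log M}^{\log(A_\infty m(u))}\frac{1}{\ell}\exp\!\Big(\ell+\frac{u^2\rho(M)}{\ell}\Big)\,d\ell .
\]
The integrand increases towards the upper endpoint $\ell=\log(A_\infty m(u))=\tfrac12u^2+O(\log u)$, where $u^2\rho(M)/\ell\to2\rho(M)=O(1)$; an endpoint (Laplace) estimate shows the integral is of order $u^{-2}\exp(u^2/2)\,u^{(\alpha-2)/\alpha}$, so that all powers of $u$ and the competing exponentials cancel and the far-range contribution is bounded by $C\rho(M)$.

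Combining the two ranges gives, for every fixed $M$, $\limsup_{u\to\infty}(\cdots)\le C\rho(M)$; letting $M\to\infty$ and using $\rho(M)\to0$ yields the claim. I expect the far range to be the main obstacle: the summation length $\exp(u^2/2)$ and the prefactor together contribute $\exp(u^2/2)$, and only the logarithmic decay of $r$ guaranteed by {\bf A3}, evaluated precisely at the critical scale $\log t\sim\tfrac12u^2$, is strong enough to cancel it, whereas mere decay $r(t)\to0$ would not. This is exactly why {\bf A3} (and not only {\bf A1}--{\bf A2}) is imposed, and it is where the reduction to Lemma~12.3.1 of \cite{Lead83} carries the weight of the estimate.
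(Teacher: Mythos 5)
Your argument is correct in substance, but it is not the paper's route: the paper gives no proof of this lemma at all, presenting it as a ``straightforward consequence of Lemma 12.3.1 in \cite{Lead83}'' --- which it is, since taking $T=A_\infty m(u)$ in that lemma makes $Tu^{2/\alpha}\Psi(u)=A_\infty/(C^{1/\alpha}\mathcal{H}_\alpha)$ bounded, exactly the hypothesis required there. What you have done is reconstruct, essentially faithfully, the Berman-type argument that lies behind the cited lemma: split the lags at a fixed large $M$; kill the range $[\varepsilon,M]$ using $\eta_M=\sup_{t\in[\varepsilon,M]}|r(t)|<1$ together with the polynomial count $O(u^{2/\alpha})$ of terms, so that this piece is $O\bigl(u^{1+2/\alpha}\exp\bigl(u^2(\tfrac12-\tfrac1{1+\eta_M})\bigr)\bigr)\to0$; kill the range $(M,A_\infty m(u)]$ using $|r(t)|\le\rho(M)/\log t$ at the critical scale $\log t\sim u^2/2$, where all powers of $u$ and exponentials cancel to leave $O(\rho(M))$; then let $M\to\infty$. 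Your exponent bookkeeping is right in both ranges. A point worth crediting: the lemma assumes only \textbf{A1} and \textbf{A3}, not \textbf{A2}, and your derivation of $|r(t)|<1$ for $t>0$ from \textbf{A3} (decay of $r$ rules out the periodicity that $|r(t_0)|=1$ would force) is exactly the right substitute, so $\eta_M<1$ is legitimately available. What your proof buys over the paper's citation is self-containedness and an explicit display of why \textbf{A3}, and not mere decay of $r$, is needed; what the citation buys is brevity.

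One imprecision should be repaired. The integrand $\ell^{-1}\exp\bigl(\ell+u^2\rho(M)/\ell\bigr)$ is \emph{not} increasing on all of $[\log M,\log(A_\infty m(u))]$: the exponent has derivative $1-u^2\rho(M)/\ell^2$, so the integrand decreases up to $\ell=u\sqrt{\rho(M)}$ and increases thereafter, and its maximum sits at one of the two endpoints. The endpoint-Laplace estimate you invoke therefore needs the additional (easy) check that the left endpoint and the piece $[\log M,\,u\sqrt{\rho(M)}]$ contribute negligibly, i.e.\ that $u\sqrt{\rho(M)}\,\tfrac{M}{\log M}\exp\bigl(u^2\rho(M)/\log M\bigr)$ is $o\bigl(u^{-2}\exp(u^2/2)\bigr)$; this holds precisely because $\rho(M)<\tfrac12\log M$ once $M$ is large, which is anyway the regime you pass to. The same smallness is what legitimizes the sum-to-integral replacement (the bounding function of $t$ is decreasing, so the boundary term is controlled by its value at $t=M$). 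With these two sentences added, the proof is complete.
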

The following lemma follows from Lemma 12.2.11 in \cite{Lead83}.
\begin{Lemma}    \label{12.2.11} 
Suppose that {\bf A1} holds. Let $h > 0$ be fixed and such that
$\sup_{\varepsilon \le t \le h} r(t) < 1$ for each $\varepsilon > 0$.
Let $q(u) = a u^{-2/\alpha}$.
Then for each interval $I$ of length $h$,
\[
        0
    \le
        \pr(X(jq) \le u, jq(u) \in I) - \pr(\sup_{s \in I} X(s) \le u)
    \le
        h \rho(a) \frac{1}{m(u)} + o\left(\frac{1}{m(u)}\right)
\]
where, $\rho(a) \to 0$ as $a \to 0$.
\end{Lemma}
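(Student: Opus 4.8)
The plan is to read the left-hand difference as the probability of a single \emph{bad} event and localise it cell-by-cell along the grid. Since every grid point $jq(u)\in I$ satisfies $X(jq(u))\le \sup_{s\in I}X(s)$, the event $\{\sup_{s\in I}X(s)\le u\}$ is contained in $\{X(jq(u))\le u \text{ for all } jq(u)\in I\}$; hence the difference is nonnegative, which delivers the lower bound for free. Writing $E_u:=\{X(jq(u))\le u\ \forall\, jq(u)\in I\}\cap\{\sup_{s\in I}X(s)>u\}$, the difference equals $\pr(E_u)$, and the whole task reduces to the upper estimate $\pr(E_u)\le h\rho(a)/m(u)+o(1/m(u))$ with $\rho(a)\to0$ as $a\to0$.

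Next I would localise. Partition $I$ into the $N=\lfloor h/q(u)\rfloor$ consecutive cells $[jq(u),(j+1)q(u)]$. On $E_u$ the continuous supremum exceeds $u$ somewhere, hence inside at least one cell whose two grid endpoints are $\le u$; a union bound gives
\[
\pr(E_u)\le \sum_{j}\pr\Big(\sup_{s\in[jq(u),(j+1)q(u)]}X(s)>u,\ X(jq(u))\le u,\ X((j+1)q(u))\le u\Big).
\]
By stationarity each summand equals $p(u):=\pr(\sup_{s\in[0,q(u)]}X(s)>u,\ X(0)\le u,\ X(q(u))\le u)$, so $\pr(E_u)\le N\,p(u)$. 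Here the hypothesis $\sup_{\varepsilon\le t\le h}r(t)<1$ together with {\bf A1} guarantees $r(t)<1$ on $(0,h]$, so distinct cells do not interact degenerately and the single-cell term genuinely captures all of $E_u$.

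To evaluate $p(u)$ I would use the rescaling $s=tu^{-2/\alpha}$ underlying the derivation of \refs{pic}. Under {\bf A1} the conditioned field $\{u(X(tu^{-2/\alpha})-u):t\in[0,C^{1/\alpha}a]\}$ converges to $\sqrt2 B_\alpha(t)-t^\alpha$, and the same computation that yields \refs{pic} gives $p(u)=\beta(a)\Psi(u)(1+o(1))$, where $\beta(a)$ is the fractional-Brownian functional measuring the excess of $\sup_{t\in[0,C^{1/\alpha}a]}(\sqrt2 B_\alpha(t)-t^\alpha)$ over its values at the two endpoints $0$ and $C^{1/\alpha}a$. Since $N=(h/a)u^{2/\alpha}(1+o(1))$ and $1/m(u)=C^{1/\alpha}\mathcal{H}_\alpha u^{2/\alpha}\Psi(u)$, multiplying gives
\[
\pr(E_u)\le N\,p(u)=\frac{h}{a}u^{2/\alpha}\beta(a)\Psi(u)(1+o(1))=h\,\rho(a)\,\frac{1}{m(u)}(1+o(1)),\qquad \rho(a):=\frac{\beta(a)}{a\,C^{1/\alpha}\mathcal{H}_\alpha}.
\]

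The genuinely delicate point is $\rho(a)\to0$ as $a\to0$, i.e. that the per-cell discretisation gap $\beta(a)$ is $o(a)$. Up to the normalisation by $\mathcal{H}_\alpha$ this $\rho(a)$ is exactly the difference $\mathcal{H}_\alpha-\mathcal{H}_\alpha^{a}$ between the continuous Pickands constant and its mesh-$a$ discretisation $\mathcal{H}_\alpha^{a}=\lim_{S\to\infty}S^{-1}\E\exp(\sup_{t\in C^{1/\alpha}a\,\mathbb{Z}\cap[0,S]}(\sqrt2 B_\alpha(t)-t^\alpha))$, so the claim amounts to the convergence $\mathcal{H}_\alpha^{a}\to\mathcal{H}_\alpha$ as the mesh shrinks. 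This convergence, together with the uniform-integrability and weak-convergence estimates that legitimise the single-cell evaluation of $p(u)$ uniformly across the $N\to\infty$ cells, is precisely the content imported from Lemma~12.2.11 in \cite{Lead83}; checking that our normalisation $q(u)=au^{-2/\alpha}$ and constant $m(u)$ match theirs, and that {\bf A1} supplies the local fractional-Brownian structure, is the routine remainder. I expect the uniform control of the $o(1)$ in $p(u)$ and the limit $\beta(a)=o(a)$ to be the main obstacles.
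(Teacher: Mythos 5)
Your lower bound and the localisation $E_u\subseteq\bigcup_j A_j$ (where $A_j$ is the event that the supremum over the $j$-th cell exceeds $u$ while its two grid endpoints lie at or below $u$) are both correct, but the quantitative heart of your argument is false: the claim $\beta(a)=o(a)$, equivalently $\rho(a)\to 0$, fails for every $\alpha\in(0,2)$. With $S=C^{1/\alpha}a$, your $\beta(a)$ is
$\E\exp\bigl(\sup_{t\in[0,S]}(\sqrt2 B_\alpha(t)-t^\alpha)\bigr)-\E\exp\bigl(\max(0,\sqrt2 B_\alpha(S)-S^\alpha)\bigr)$;
since both exponents are nonnegative and the first dominates the second, the inequality $e^x-e^y\ge x-y$ for $x\ge y\ge0$ gives
\[
\beta(a)\ \ge\ \sqrt2\,\E\Bigl[\sup_{t\in[0,S]}B_\alpha(t)-\bigl(B_\alpha(S)\bigr)^+\Bigr]-S^\alpha\ =\ \sqrt2\,\delta_\alpha\,S^{\alpha/2}-S^\alpha,
\]
where by self-similarity $\delta_\alpha=\E\bigl[\sup_{[0,1]}B_\alpha-(B_\alpha(1))^+\bigr]>0$ for $\alpha<2$ (on the event $B_\alpha(1)<0$, of probability $1/2$, the supremum is a.s. strictly positive). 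Since $S^{\alpha/2}\gg S^\alpha$ as $a\to0$, this yields $\beta(a)\ge c\,a^{\alpha/2}$ with $c>0$, hence $\rho(a)=\beta(a)/(a C^{1/\alpha}\mathcal{H}_\alpha)\gtrsim a^{\alpha/2-1}\to\infty$ rather than $\to 0$. Your bound $N\,p(u)$ therefore diverges relative to $1/m(u)$ instead of vanishing; only in the degenerate case $\alpha=2$ (where $B_2(t)=t\mathcal{N}$, the leading terms cancel and $\beta(a)=O(a^3)$) does your computation survive. Probabilistically, the defect is that an excursion of $X$ above $u$ lives on the scale $u^{-2/\alpha}$, i.e. spans of order $1/a$ grid cells, so constraining only the two endpoints of one cell retains almost none of the constraint ``all grid points in $I$ lie below $u$'' defining $E_u$; the per-cell event is dominated by grazing excursions with overshoot of order $S^{\alpha/2}/u$, whose probability cost is $S^{\alpha/2}\Psi(u)$, not $o(S)\Psi(u)$, and the union bound overcounts by exactly the divergent factor $a^{\alpha/2-1}$.

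This cannot be repaired cell by cell: the cancellation must be performed globally over the whole interval $I$. That is what Lemma 12.2.11 of \cite{Lead83} does (and note the paper offers no proof of its own, only this citation): write the difference as $\pr(\sup_{s\in I}X(s)>u)-\pr(\max_{jq(u)\in I}X(jq(u))>u)$, prove that both terms are asymptotic to $h\,u^{2/\alpha}\Psi(u)\,C^{1/\alpha}$ times, respectively, $\mathcal{H}_\alpha$ and a mesh-$a$ discrete Pickands constant $\mathcal{H}_\alpha^{(a)}$ defined through the maximum over the \emph{entire} rescaled grid (not over two points), and then invoke the convergence $\mathcal{H}_\alpha^{(a)}\to\mathcal{H}_\alpha$ as $a\to0$. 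You correctly identified this last convergence as the key input, but your decomposition discards precisely the inter-cell grid constraints that make it applicable, so the identification of your $\rho(a)$ with $\mathcal{H}_\alpha-\mathcal{H}_\alpha^{(a)}$ is incorrect.
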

The next Lemma extends Theorem 12.3.4 in \cite{Lead83}, by showing that the considered
convergence is uniform on compact intervals. We note that using Slepian inequality,
we are able to give much shorter argument for the proof of the lower bound.

\begin{Lemma} \label{l.lim}
Let $\{X(t):t\ge0\}$ be a centered stationary Gaussian process with covariance function
that satisfies {\bf A1-A3}. Then for each $0 < A_0 < A_\infty < \infty$
\[
    \pr(\sup_{s \in [0,xm(u)]} X(s) \le u) \to e^{-x},
\]
as $u \to \infty$, uniformly for $x \in [A_0, A_\infty]$.
\end{Lemma}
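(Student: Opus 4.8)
The plan is to reduce the uniform statement to a pointwise one via monotonicity, and to obtain the pointwise limit by the classical blocking/comparison scheme underlying Theorem 12.3.4 in \cite{Lead83}, fed by Lemmas \ref{12.3.1} and \ref{12.2.11}. The decisive preliminary observation is that for each fixed $u$ the map $x\mapsto\pr(\sup_{s\in[0,xm(u)]}X(s)\le u)$ is non-increasing, while the candidate limit $x\mapsto e^{-x}$ is continuous and strictly decreasing on $[A_0,A_\infty]$. Consequently, once $\pr(\sup_{s\in[0,xm(u)]}X(s)\le u)\to e^{-x}$ is known for each fixed $x$, uniformity on $[A_0,A_\infty]$ comes for free: choosing a finite mesh $A_0=x_0<x_1<\dots<x_M=A_\infty$ with $e^{-x_i}-e^{-x_{i+1}}<\varepsilon$ and sandwiching an arbitrary $x\in[x_i,x_{i+1}]$ between $x_i$ and $x_{i+1}$ using monotonicity of both the prelimit and the limit, the pointwise convergence at the finitely many $x_i$ forces $\sup_{x\in[A_0,A_\infty]}|\pr(\sup_{s\in[0,xm(u)]}X(s)\le u)-e^{-x}|\le 2\varepsilon$ for all large $u$. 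This Pólya-type step is precisely where the ``uniform'' improvement over \cite{Lead83} is gained.

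It remains to prove the pointwise limit. Here I would first pass from the continuous supremum to the discrete maximum over the grid $q(u)=au^{-2/\alpha}$: applying Lemma \ref{12.2.11} to each of the $\approx xm(u)/h$ subintervals of length $h$ tiling $[0,xm(u)]$, the accumulated discretization error is at most $x\rho(a)+o(1)$, which is uniformly small in $x\in[A_0,A_\infty]$ once $a$ is chosen small. I would then split $[0,xm(u)]$ into consecutive blocks $I_i$ of fixed length $h$ and argue that the blocks are asymptotically independent. For a single block, Pickands' asymptotics gives $\pr(\sup_{s\in I_i}X(s)>u)=\mathcal{H}_\alpha C^{1/\alpha}hu^{2/\alpha}\Psi(u)(1+o(1))=\frac{h}{m(u)}(1+o(1))$, so that a product over the $\approx xm(u)/h$ blocks telescopes, by the very definition of $m(u)$, to $e^{-x}$.

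The asymptotic independence is quantified by Berman's normal comparison inequality, whose error is a double sum of off-block covariances of the form $|r(kq(u))|\exp(-u^2/(1+|r(kq(u))|))$; this is exactly the quantity shown to vanish in Lemma \ref{12.3.1} under {\bf A3}. This route yields the upper bound $\limsup\pr(\sup_{s\in[0,xm(u)]}X(s)\le u)\le e^{-x}$. For the matching lower bound I would instead invoke Slepian's inequality, comparing $\{X(s)\}$ with its block-independent counterpart, which gives $\pr(\bigcap_i\{\sup_{s\in I_i}X(s)\le u\})\ge(1-o(1))\prod_i\pr(\sup_{s\in I_i}X(s)\le u)$ directly and thereby avoids a second double-sum estimate.

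The main obstacle is this asymptotic-independence step: one must bound the off-block covariance sum uniformly in $x$ and, crucially, organize the nested limits — first $u\to\infty$, then the block length $h$ and the mesh parameter $a$ — so that the discrete per-block asymptotics converge to the genuine Pickands constant $\mathcal{H}_\alpha$ while the accumulated blocking, discretization and comparison errors all vanish in the correct order. Checking that the covariance ordering required by Slepian is respected, so that the short lower bound is legitimate, is the other delicate point; granting these, the remainder, including the passage to uniformity, is routine.
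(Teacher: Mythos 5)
Your proposal takes a genuinely different route from the paper, and its central novel step is sound. The paper never reduces to the pointwise case: it re-runs the whole blocking argument keeping every error term uniform in $x$ (each error is bounded by a quantity depending only on $A_\infty$, $\varepsilon$ and $a$, not on $x$), and it gets the lower bound in one line by a Slepian comparison of $\{X(t)\}$ with a process glued from independent copies of $\{X(t)\}$ on unit intervals, yielding $\pr(\sup_{s\in[0,xm(u)]}X(s)\le u)\ge\bigl(\pr(\sup_{s\in[0,1]}X(s)\le u)\bigr)^{n_x+1}$. Your observation that $x\mapsto\pr(\sup_{s\in[0,xm(u)]}X(s)\le u)$ is non-increasing while $e^{-x}$ is continuous and decreasing, so that pointwise convergence upgrades itself to uniform convergence on $[A_0,A_\infty]$ by a P\'olya-type mesh argument, is correct — and it shows that the ``uniform'' extension which is the stated point of this lemma is essentially automatic once the pointwise statement is known; the pointwise statement, with $T=xm(u)$, is exactly Theorem 12.3.4 of \cite{Lead83}. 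Your route is therefore shorter and more modular; what the paper's route buys is a self-contained argument with explicit error control, plus the Slepian shortcut for the lower bound.

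Two details in your sketch of the pointwise part need repair. First, Lemma \ref{12.3.1} controls only grid pairs at mutual distance at least $\varepsilon$; if consecutive blocks abut, the comparison-lemma error also contains cross-boundary pairs at distance as small as $q(u)=au^{-2/\alpha}$, where $|r|$ is close to $1$, and these terms are not covered by the lemma (their accumulated contribution need not vanish — for $\alpha=2$ it stays bounded away from $0$ and blows up as $a\to0$). This is exactly why the paper separates the blocks by gap intervals $I_k^\star$ of length $\varepsilon$, applies the comparison only across separated pieces, and then removes the gaps via Pickands' theorem at the cost of an $\varepsilon A_\infty$ term that vanishes as $\varepsilon\to0$; your write-up must include this device. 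Second, the Slepian step you rightly flag as delicate does require $r(t)\ge 0$ at the relevant lags: the block-independent process has zero covariance across blocks, so the covariance ordering fails wherever $r<0$, which {\bf A1}--{\bf A3} do not exclude. (Notably, the paper's own lower bound applies Slepian without comment on this point, so your concern applies to it as well.) The safe fallback, consistent with your scheme, is to obtain the lower bound from the two-sided normal comparison inequality — the same double-sum estimate handled by Lemma \ref{12.3.1} — as in the classical proof of Theorem 12.3.4.
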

%
%
\begin{proof}
Let $n_x := \left\lfloor xm(u) \right\rfloor$. \\
{\it Lower bound.} We prove that
\begin{equation} \label{lem.lower}
        \left( \pr(\sup_{s \in [0,1]} X(s) \le u) \right)^{n_x + 1}
    \le
        \pr(\sup_{s \in [0,xm(u)]} X(s) \le u),
\end{equation}
for each $u$ and $x\in[A_0,A_\infty]$.

Let
$ \{ X_i(t) , t \ge 0\}$, $i=0,1,...$, be independent copies of
$ \{ X(t) , t \ge0\}$  and
$ \{ Y(t) ; t \ge 0 \} $ be such that
$    Y(t) = X_i(t)$ for $t\in [i,i+1)$.

By Slepian inequality (see, e.g., Theorem C.1 in \cite{Pit96})
applied to processes $\{X(t)\}$ and $\{Y(t)\}$, we have
\begin{eqnarray}
    \nonumber
        \pr (\sup_{s \in [0, xm(u)]} X(s) \le u)
    &\ge&
        \pr (\sup_{s \in [0, n_x + 1]} X(s) \le u)\\
    \nonumber
    &\ge&
        \pr (\sup_{s \in [0, n_x + 1]} Y(s) \le u)  \\
    \nonumber
    &=&
        \left( \pr(\sup_{s \in [0, 1]} Y(s) \le u)  \right)^{n_x + 1}\\
    \nonumber
    &=&
        \left( \pr(\sup_{s \in [0, 1]} X(s) \le u)  \right)^{n_x + 1},
\end{eqnarray}
which proves \refs{lem.lower}.

{\it Upper bound.}
We show that
\begin{equation}       \label{lem.upper}
 \pr(\sup_{s \in [0,xm(u)]} X(s) \le u)
    \le
        \left( \pr(\sup_{s \in [0,1]} X(s) \le u) \right)^{n_x} (1 + o(1))
\end{equation}
as $u \to \infty$, uniformly for $x \in [A_0,A_\infty]$.

Let $\varepsilon > 0$.
We divide interval $[0, n_x]$ onto intervals of length $1$,
and split each of them onto subintervals $I_k^\star$, $I_k$ of length
$\varepsilon$, $1-\varepsilon$, respectively. \\
Let $a > 0$ and $q := q(u) = a u^{-\frac{2}{\alpha}}$. Observe that for $k = 1,2,...$
\[
        \pr\left(\sup_{s \in [0, n_x]} X(s) \le u\right)
    \le
        \pr\left(X(kq) \le u, kq \in \bigcup_{j = 1}^{n_x} I_j \right).
\]

In the first step we prove that
\begin{equation} \label{l.sum}
        \left|
        \pr(X(kq) \le u, kq \in \bigcup_{j = 1}^{ n_x } I_j )
        -
        \prod_{j = 1}^{n_x} \pr(X(kq) \le u, kq \in I_j)
        \right|
    \to
        0
\end{equation}
as $u \to \infty$ uniformly for $x \in [A_0, A_\infty]$.

To show (\ref{l.sum}) we proceed along similar lines to the proof of Theorem 8.2.4 in
\cite{Lead83}. Let $\Lambda = (\lambda_{ij})$ be the covariance matrix of
$X(kq), kq \in \cup_{j = 1}^{n_x} I_j$
and let $\Sigma = (\sigma_{ij})$ be the covariance matrix of
$Y(kq), kq \in \cup_{j = 1}^{n_x} I_j$ of independent standard normal random variables.
Applying Theorem 4.2.1 in \cite{Lead83}, we have
\begin{eqnarray}
\nonumber
\lefteqn{
        \left|
        \pr(X(kq) \le u, kq \in \bigcup_{j = 1}^{n_x} I_j )
        -
        \prod_{j = 1}^{n_x} \pr(X(kq) \le u, kq \in I_j)
        \right|      }\\
    \nonumber
    &=&
        \left|
        \pr(X(kq) \le u, kq \in \bigcup_{j = 1}^{n_x} I_j )
        -
        \pr(Y(kq) \le u, kq \in \bigcup_{j = 1}^{n_x} I_j )
        \right| \\
    &\le&   \label{matrix}
        \frac{1}{2\pi} \sum_{1\le i < j \le L} |\lambda_{ij} - \sigma_{ij}|
        (1 - \rho_{ij}^2)^{-\frac{1}{2}} \exp\left(- \frac{u^2}{1 + \rho_{ij}} \right),
\end{eqnarray}
where $L$ is the total number of $kq$ - points in $\cup_{j = 1}^{n_x} I_j$
and $\rho_{ij} = \max (|\lambda_{ij}|, |\sigma_{ij}|)$.
Now observe that, by the definition of the sequence $Y(kq)$ and matrix $\Sigma$, we have that
$|\lambda_{ii} - \sigma_{ii}| = 0$, $|\lambda_{ij} - \sigma_{ij}| = |r(kq)|$ for $k = i - j$.
Moreover, from the construction of the intervals $I_j$,
the minimum value of $kq$ is at least $\varepsilon$.
Combining the above with the observation that
$\sup\{|r(t)|;|t| \ge \varepsilon \} := \rho < 1$,
we arrive at an upper bound for (\ref{matrix})
\begin{eqnarray}
\nonumber
\lefteqn{
\frac{1}{2\pi(1 - \rho^2)^\frac{1}{2} } \frac{n_x}{q}
        \sum_{\varepsilon \le kq \le xm(u)} |r(kq)| \exp\left( -\frac{u^2}{1 + |r(kq)|} \right)}\\
    &\le&
        \frac{1}{2\pi(1 - \rho^2)^\frac{1}{2} } \frac{A_\infty m(u)}{q}
        \sum_{\varepsilon \le kq \le A_\infty m(u)} |r(kq)| \exp\left( -\frac{u^2}{1 + |r(kq)|} \right)\to 0,\label{nowe1}
\end{eqnarray}
as $u\to\infty$, where \refs{nowe1} is due to Lemma \ref{12.3.1}.
Hence (\ref{l.sum}) is satisfied.

In the second step we prove that
\begin{equation}    \label{lem.product}
        \limsup_{u \to \infty} \left| \prod_{j = 1}^{n_x} \pr(X(kq) \le u, kq \in I_j)
    -
        \left(\pr(\sup_{s \in [0,1]} X(s) \le u)\right)^{n_x}
        \right|
    \to
        0
\end{equation}
as $u \to \infty$, uniformly for $x \in [A_0, A_\infty]$.
In order to prove \refs{lem.product}, we use that
due to Lemma 27.1 in \cite{Bil95}, we have
\begin{eqnarray}
    \nonumber
        0
    &\le&
        \prod_{j = 1}^{n_x} \pr( X(kq) \le u, kq \in I_j)
    -
        \prod_{j = 1}^{n_x} \pr( \sup_{s \in I_j} X(s) \le u) \\
    \nonumber
    &\le&
        n_x \max_j \left( \pr( X(kq) \le u, kq \in I_j) -  \pr( \sup_{s \in I_j} X(s) \le u) \right) \\
    &\le&   \label{l.lim.max}
        (1 - \varepsilon)\frac{n_x}{m(u)} \rho(a) + n_x o\left( \frac{1}{m(u)} \right)  \\
    \nonumber
    &\le&
        (1 - \varepsilon)A_\infty \rho(a) + A_\infty m(u)o\left(\frac{1}{m(u)}\right)
    \to
        (1 - \varepsilon)A_\infty \rho(a)
    \le
        A_\infty \rho(a),
\end{eqnarray}
as $u \to \infty$.
Where (\ref{l.lim.max}) follows from Lemma \ref{12.2.11} 
with $\rho(a) \to 0$ as $a \to 0$.
Besides, the stationarity of $\{X(t)\}$ implies
\[
        \prod_{j = 1}^{n_x} \pr(\sup_{s \in I_k} X(s) \le u)
    =
        \left( \pr(\sup_{s \in I_1 } X(s) \le u) \right)^{n_x}
\]
and
\begin{eqnarray}
    \nonumber
        0
    &\le&
        \left( \pr(\sup_{s \in I_1 } X(s) \le u) \right)^{n_x}
        -
        \left( \pr(\sup_{s \in [0,1]} X(s) \le u) \right)^{n_x} \\
    \nonumber
    &\le&
    n_x \left(\pr(\sup_{s \in I_1 } X(s) \le u) - \pr(\sup_{s \in [0,1]} X(s) \le u) \right)\\
    \nonumber
    &\le&
        n_x \pr(\sup_{s \in I_1^\star} X(s) > u)    \\
    \nonumber
    &\le&
        A_\infty m(u) \pr(\sup_{s \in I_1^\star} X(s) > u) \\
    &=&  \label{I.star}
     \varepsilon A_\infty (1 + o(1))
\end{eqnarray}
as $u \to \infty$, where (\ref{I.star}) is
by Theorem D.2 in \cite{Pit96}. This confirms (\ref{lem.product}).

Now, in order to complete the proof, it suffices to combine (\ref{lem.lower}) and (\ref{lem.upper}) with the observation that
\[
        \lim_{u \to \infty} \left( \pr(\sup_{s \in [0,1]} X(s) <u)\right)^{n_x}
    =
        \lim_{u \to \infty} \left(1 - \frac{1}{m(u)} + o\left(\frac{1}{m(u)}\right)\right)^{x m(u)}
    =
        e^{-x},
\]
uniformly for $x \in [A_0, A_\infty]$.
\end{proof}


\subsection{Proof of Theorem \ref{th.finite}}

For given $R > 0$ and $u>0$, we introduce
$    \Delta_0 = \left[0, u^{-\frac{2}{\alpha}} R\right)$,
$    \Delta_k = \left[k u^{-\frac{2}{\alpha}} R, (k + 1)u^{-\frac{2}{\alpha}} R\right)$,
for $k = 1,2,...$,
$N_t = \left\lfloor  \frac{t}{u^{-\frac{2}{\alpha}} R} \right\rfloor$,
where $\left\lfloor \cdot \right\rfloor$ denotes integer part of a number.\\
{\it Upper bound.}
By stationarity of $\{X(t): t \in [0, \infty)\}$, we have
\begin{eqnarray}
        \pr(\sup_{s \in [0,T]} X(s) > u)
    \nonumber
    &\le&                        
        \int_0^\infty ( N_t + 1 )
        \pr(\sup_{s \in \Delta_0} X(s) > u) dF_T(t)\\
    &\le&
    \nonumber
        \pr(\sup_{s \in \Delta_0 } X(s) > u)
        \left( \frac{1}{R} u^\frac{2}{\alpha}
        \int_0^\infty t dF_T(t) + 1 \right)\\
    &=&                   \label{th1.le2}        
    \E T C^\frac{1}{\alpha}\frac{\mathcal{H}_\alpha(R)}{R}
     u^\frac{2}{\alpha} \Psi(u) (1 + o(1)),
\end{eqnarray}

as $u \to \infty$, where (\ref{th1.le2})
follows by Lemma D.1 in \cite{Pit96}. Thus, passing with $R \to \infty$,
we obtain the asymptotic upper bound
\[
        \pr(\sup_{s \in [0,T]} X(s) > u)
    \le
      \E T  C^\frac{1}{\alpha} \mathcal{H}_\alpha  u^\frac{2}{\alpha} \Psi(u) (1 + o(1))
\]
as $u \to \infty$.\\
%
{\it Lower bound.}
The idea of the proof of the lower bound is analogous to the proof of the lower bound
in Theorem D.2 in \cite{Pit96}. Hence we present only main steps of the argument.

Following Bonferroni inequality and stationarity of $\{X(t): t \in [0, \infty)\}$,
we have
\begin{eqnarray*}
        \pr(\sup_{s \in [0,T]} X(s) > u)
    &\ge&
    \nonumber
        \int_0^u \pr(\sup_{s \in [0,t]} X(s) > u) dF_T(t)\\
    &\ge&                           \label{th1.ge1}                 
        \int_0^u N_t \pr(\sup_{s \in \Delta_0} X(s) > u)    dF_T(t) \\
    &&-
    \nonumber
        \int_0^u \sum_{0\le i < j \le N_t}
        \pr(\sup_{s \in \Delta_i} X(s) > u , \sup_{s \in \Delta_j} X(s) > u) dF_T(t) \\
    &=&
    \nonumber
    I_1 - I_2.
\end{eqnarray*}
%
Observe that
\begin{eqnarray}
        I_1
    &\ge&
    \nonumber
      \pr(\sup_{s \in \Delta_0 } X(s) > u)
      \int_0^u \left( \frac{t}{u^{-\frac{2}{\alpha}}R} - 1 \right) dF_T(t)\\
    &\ge&
    \nonumber
        \pr(\sup_{s \in \Delta_0 } X(s) > u)
        \left( \frac{1}{R} u^\frac{2}{\alpha} \int_0^u t dF_T(t) - 1 \right)\\
    &=& \label{th.finite.I1}
        C^\frac{1}{\alpha}\frac{\mathcal{H}_\alpha(R)}{R} \E T u^\frac{2}{\alpha} \Psi(u) (1 + o(1))
\end{eqnarray}
as $u \to \infty$, where (\ref{th.finite.I1}) follows by Lemma D.1 in \cite{Pit96}.\\
Thus, having in mind that $R$ was arbitrary and
$\mathcal{H}_\alpha(R)/R\to\mathcal{H}_\alpha$,
it suffices to show that $I_2$ is asymptotically negligible.\\
Let $\varepsilon > 0$. Then
\begin{eqnarray}
        I_2
    &=&
    \nonumber
        \int_0^u \sum_{k = 1}^{N_t} (N_t - k)
        \pr(\sup_{s \in \Delta_0} X(s) > u , \sup_{s \in \Delta_k} X(s) > u) dF_T(t)\\
    &\le&
    \nonumber
        \frac{1}{R} u^\frac{2}{\alpha}
        \sum_{k = 1}^{N_u}
        \pr(\sup_{s \in \Delta_0} X(s) > u , \sup_{s \in \Delta_k} X(s) > u)
        \int_0^u t dF_T(t) \\
    &\le&
    \nonumber
        \frac{1}{R} \E T u^\frac{2}{\alpha}
        \pr(\sup_{s \in \Delta_0} X(s) > u , \sup_{s \in \Delta_1} X(s) > u) \\
    & + &
        \nonumber
        \frac{1}{R} \E T u^\frac{2}{\alpha}
        \sum_{k = 2}^{N_\frac{\varepsilon}{4}}
        \pr(\sup_{s \in \Delta_0} X(s) > u , \sup_{s \in \Delta_k} X(s) > u) \\
    &+&
        \nonumber
        \frac{1}{R} \E T u^\frac{2}{\alpha}
        \sum_{k = N_\frac{\varepsilon}{4}}^{N_u}
        \pr(\sup_{s \in \Delta_0} X(s) > u , \sup_{s \in \Delta_k} X(s) > u) \\
    \nonumber
    &=&
    I_3 + I_4 + I_5.
\end{eqnarray}

Following line by line the same argument as in the proof of Theorem D.2 in \cite{Pit96},
we conclude that $I_3$ and $I_4$ are negligible.
In order to bound $I_5$, we observe that
\[
        \pr(\sup_{s \in \Delta_0} X(s) > u, \sup_{t  \in \Delta_k} X(t) > u)
    \le
        \pr(\sup_{(t,s) \in \Delta_0 \times \Delta_k} X(s) + X(t) > 2u).
\]
Then, let $u$ be such that $R u^{-\frac{2}{\alpha}} \le \varepsilon/16$.
Hence the distance between
$\Delta_0$ and $\Delta_k$, for $k \ge N_\frac{\varepsilon}{4}$,
is not less then $\varepsilon/4$. Moreover, for each $s \in \Delta_0, t \in \Delta_k$
\[
        \var(X(s) + X(t))
    =
        2 + 2r(t-s)
    =
        4 - 2(1 - r(t - s))
    \le
        4 - \delta,
\]
with $\delta = 2 \inf_{s \ge \varepsilon/4} (1 - r(s)) > 0$
and
\[
        \E \sup_{(s,t) \in \Delta_0 \times\Delta_k} (X(s) + X(t))
    \le
        \E (\sup_{s \in \Delta_0} X(s) + \sup_{ t \in \Delta_k} X(t))
    =
        2   \E \sup_{s \in \Delta_0} X(s)
    \le
        a
\]
for some constant $a>0$ and each $k\in[N_{\varepsilon/4}, N_u]$.\\
Combining the above with Borell inequality (see, e.g., Theorem 2.1 in Adler \cite{Adl90}),
we obtain
\begin{eqnarray}
        \pr(\sup_{s \in \Delta_0} X(s) > u, \sup_{t \in \Delta_k} X(t) > u)
    \nonumber
    &\le&
        \pr(\sup_{(t,s) \in \Delta_0 \times \Delta_k} X(s) + X(t) > 2u) \\
    \nonumber
    &\le&
        \exp \left( - \frac{(u - a/2)^2}{2(1 - \delta/4)} \right).
\end{eqnarray}

Thus
\[
         I_5
    \le
         \frac{1}{R} \E T u^\frac{2}{\alpha} N_u \exp\left( - \frac{(u - a/2)^2}{2(1 - \delta/4)}\right),
\]
which in view of \refs{th.finite.I1} confirms that $I_5$ is asymptotically negligible.
This completes the proof.
\halmos

\subsection{Proof of Theorem \ref{th.reg}}
Let $0 < A_0 < A_\infty$. We make the following decomposition
\begin{eqnarray}
        \pr(\sup_{s \in [0,T]} X(s) > u)
    \nonumber
    &=&
        \int_0^{A_0 m(u)} \pr(\sup_{s \in [0,t]} X(s) > u) dF_T(t)\\
    \nonumber
    &+&
        \int_{A_0 m(u)}^{A_\infty m(u)} \pr(\sup_{s \in [0,t]} X(s) > u) dF_T(t)\\
    \nonumber
    &+&
        \int_{A_\infty m(u)}^\infty \pr(\sup_{s \in [0,t]} X(s) > u) dF_T(t)\\
    \nonumber
    &=&
    I_1 + I_2 + I_3.
\end{eqnarray}
We analyze each of the integrals $I_1, I_2, I_3$ separately. \\

{\it Integral $I_1$}.
Due to the stationarity of the process $\{X(t):t\ge0\}$, we have
\begin{eqnarray}
    \nonumber
       I_1&\le&
        \pr(\sup_{s \in [0,1]} X(s) > u)
        \left[ \int_0^{A_0 m(u)} t  dF_T(t) + 1 \right]\\
    \nonumber
    &=&
        \pr(\sup_{s \in [0,1]} X(s) > u)
        \left[ \int_0^{A_0 m(u)} \pr(T > t)  dt - A_0 m(u) \pr(T > A_0 m(u)) + 1 \right].\\
        \label{I1.1}
\end{eqnarray}
Applying Karamata's theorem (see, e.g., Proposition 1.5.8 in \cite{Bin87}) we have, as $u \to \infty$
\[
        \int_0^{A_0 m(u)} \pr(T > t) dt
    =
        \frac{1}{1-\lambda} A_0 m(u) \pr(T > A_0 m(u))  (1 + o(1)),
\]
which combined with (\ref{I1.1}) and Theorem D.2 in \cite{Pit96},
implies the following asymptotical upper bound of $I_1$
\[
    I_1
    \le
    \frac{\lambda}{1-\lambda} A_0 \pr(T > A_0 m(u)) (1 + o(1))
  =
    \frac{\lambda}{1-\lambda} A_0^{1-\lambda} \pr(T > m(u)) (1 + o(1)),
\]
as $u \to \infty$.\\
{\it Integral $I_3$}. We have
\[
        I_3 \le \pr(T > A_\infty m(u))
    =
        A_\infty^{-\lambda} \pr( T > m(u) )(1 + o(1))
\]
as $u \to \infty$.

{\it Integral $I_2$}.
Let $\varepsilon > 0$. Due to Lemma \ref{l.lim}, for sufficiently large $u$,
we get the following upper bound
\begin{eqnarray}
    \lefteqn{    \nonumber
        I_2
    =
        \int_{A_0}^{A_\infty} \pr(\sup_{s \in [0,x m(u)]} X(s) > u) dF_T(x m(u))}\\
    \nonumber
    &\le&
        (1 + \varepsilon)\int_{A_0}^{A_\infty} (1 - e^{-x}) dF_T(x m(u))    \\
    \nonumber
    &=&
        (1 + \varepsilon) [ \int_{A_0}^{A_\infty} e^{-x} \pr(T > xm(u)) dx
    \nonumber
    -
        (1 - e^{-A_\infty})\pr(T > A_\infty m(u))\\
    \nonumber
    &&+
        (1 - e^{-A_0})\pr(T > A_0 m(u))  ] .
\end{eqnarray}
In an analogous way, for the lower bound, we obtain, for sufficiently large $u$,
\begin{eqnarray}
        I_2
    \nonumber
    &\ge&
        (   1 - \varepsilon) [ \int_{A_0}^{A_\infty} e^{-x} \pr(T > xm(u)) dx  \\
    \nonumber
    &&-
        (1 - e^{-A_\infty})\pr(T > A_\infty m(u)) \\
    \nonumber
    &&+
        (1 - e^{-A_0})\pr(T > A_0 m(u))  ] .
\end{eqnarray}

Due to {\bf D2} combined with Theorem 1.5.2 in \cite{Bin87}, we have
\[
        \int_{A_0}^{A_\infty} e^{-x} \pr(T > xm(u)) dx
    =
        \pr(T > m(u)) \int_{A_0}^{A_\infty} e^{-x} x^{-\lambda}  dx (1 + o(1))
\]
as $u \to \infty$.

Thus
for each $\varepsilon > 0,  A_\infty > A_0 > 0$,
\begin{eqnarray*}
    \liminf_{u \to \infty} \frac{I_2}{\pr(T > m(u))}\ge
        (1 - \varepsilon) [ \int_{A_0}^{A_\infty} e^{-x} x^{-\lambda}  dx
        -
        (1 - e^{-A_\infty}) A_\infty^{-\lambda} + (1 - e^{-A_0}) A_0^{-\lambda} ]
\end{eqnarray*}
and
\begin{eqnarray*}
\limsup_{u \to \infty} \frac{I_2}{\pr(T > m(u))}
    \le
        (1 + \varepsilon) [  \int_{A_0}^{A_\infty} e^{-x} x^{-\lambda}  dx
        -
        (1 - e^{-A_\infty}) A_\infty^{-\lambda} + (1 - e^{-A_0}) A_0^{-\lambda} ].
\end{eqnarray*}

Hence, passing with
$A_0 \to 0$, $A_\infty \to \infty$ and $\varepsilon\to0$, we conclude that
$I_1$ and $I_3$ are negligible and
\[
I_2=\Gamma(1-\lambda)\pr(T > m(u))(1+o(1)),
\]
as $u\to\infty$,
which in view of {\bf D2}, definition of $m(u)$ and \refs{Psi}, completes the proof.
\halmos

\subsection{Proof of Theorem \ref{th.veryheavy}}

{\it Lower bound.}
From Lemma \ref{l.lim}, for given $A_\infty>0$,
\begin{eqnarray}
        \pr(\sup_{s \in [0,T]} X(s) > u)
    \nonumber
    &\ge&
        \pr(\sup_{s \in [0,A_\infty m(u)]} X(s) > u)
        \pr( T > A_\infty m(u) ) \\
    \nonumber
    &=&
        (1 - e^{-A_\infty})
        \pr(T > m(u)) (1 + o(1)),
\end{eqnarray}
as $u \to \infty$.
Thus, passing with $A_\infty\to\infty$, we get that
\[
\pr(\sup_{s \in [0,T]} X(s) > u)\ge\pr(T > m(u)) (1 + o(1)),
\]
as $u\to\infty$.\\
{\it Upper bound.}
For the upper bound we have
\[
    \pr(\sup_{s \in [0,T]} X(s) > u)
    \le
    \int_0^{A_0 m(u)} \pr(\sup_{s \in [0,t]} X(s) > u) dF_T(t)
    +
        \pr(T > m(u)).
\]
Due to the stationarity of the process $\{X(t):t\ge0\}$ we have
\begin{eqnarray}
                I_1
        \nonumber
        & =: &
        \int_0^{A_0 m(u)} \pr(\sup_{s \in [0,t]} X(s) > u) dF_T(t)\\
    \nonumber
    &\le&
        \pr(\sup_{s \in [0,1]} X(s) > u)
        \left[\int_0^{A_0 m(u)} t dF_T(t)  + 1\right] \\
    \nonumber
    &=&
        \pr(\sup_{s \in [0,1]} X(s) > u)
        \left[\int_0^{A_0 m(u)} \pr(T > t) dt - A_0m(u) \pr(T > A_0 m(u))  + 1\right] \\
    &\le&       \label{slowly.I1}
        \pr(\sup_{s \in [0,1]} X(s) > u)
        \left[\int_0^{A_0 m(u)} \pr(T > t) dt + 1\right]
\end{eqnarray}
Applying Karamata's theorem (see, e.g., Proposition 1.5.8 in \cite{Bin87}) we have, as $u \to \infty$
\[
        \int_0^{A_0 m(u)} \pr(T > t) dt
    =
        A_0 m(u) \pr(T > A_0 m(u))  (1 + o(1)),
\]
which combined with (\ref{slowly.I1}) and Theorem D.2 in \cite{Pit96},
implies the following asymptotical upper bound of $I_1$
\[
        I_1
    \le
        A_0 \pr(T > A_0 m(u)) (1 + o(1))
    =
        A_0 \pr(T > m(u)) (1 + o(1))
\]
as $u \to \infty$.
Thus,
\[
        \pr(\sup_{s \in [0,T]} X(s) > u)
    \le
        (1 + A_0) \pr(T > m(u)) (1 + o(1)),
\]
as $u\to\infty$.
In order to complete the proof it suffices to pass with $A_0\to 0$.
\halmos

\end{document}